\newlength\figureheight
\newlength\figurewidth
\newtheorem{theorem}{Theorem}[section]
\newtheorem{lemma}[theorem]{Lemma}
\title{A $\star$-product solver with spectral accuracy for non-autonomous ordinary differential equations}
\providecommand{\keywords}[1]{\textbf{\hspace{0.9cm}Keywords: } #1}
\date{}
\author{ Stefano Pozza \footnotemark[2]	\and Niel Van Buggenhout\footnotemark[2]}
\begin{document}

	\maketitle
		\renewcommand{\thefootnote}{\fnsymbol{footnote}}
	
	\footnotetext[2]{Charles University, Sokolovská 83 186, 75 Praha 8, Czech Republic. (pozza@karlin.mff.cuni.cz, buggenhout@karlin.mff.cuni.cz )}
	\footnotetext{This work was supported by Charles University Research programs No. PRIMUS/21/SCI/009 and UNCE/SCI/023, and by the Magica project ANR-20-CE29-0007 funded by the French National Research Agency.}

	\begin{abstract}
		A new method for solving non-autonomous ordinary differential equations is proposed, the method achieves spectral accuracy.
		It is based on a new result which expresses the solution of such ODEs as an element in the so called $\star$-algebra.
		This algebra is equipped with a product, the $\star$-product, which is the integral over the usual product of two bivariate distributions.
		Expanding the bivariate distributions in bases of Legendre polynomials leads to a discretization of the $\star$-product and this allows for the solution to be approximated by a vector that is obtained by solving a linear system of equations.
		The effectiveness of this approach is illustrated with numerical experiments.
	\end{abstract}
	\keywords{Ordinary differential equations, Legendre polynomials, spectral accuracy}

	\section{Introduction}
	A new method for solving non-autonomous ordinary differential equations that achieves spectral accuracy \cite[Chapter 21]{Tr13} is proposed.
	Consider a smooth function $\tilde{f}\in C^{\infty}$ that is analytic, then the ODE studied here is
	\begin{equation}\label{eq:ODE}
		\frac{d}{dt} \tilde{u}(t) = \tilde{f}(t) \tilde{u}(t), \quad \tilde{u}(-1) = 1, \quad t\in\mathcal{I}, \quad \mathcal{I}:=\left[-1,1\right].
	\end{equation}
	This method forms an essential building block in the development of a numerical method to solve the matrix ODE
	\begin{equation*}
		\frac{d}{dt} \tilde{U}(t) = \tilde{A}(t) \tilde{U}(t), \quad \tilde{U}(-1) = I, \quad t\in\mathcal{I},
	\end{equation*}
	for sparse large-to-huge matrix functions $\tilde{A}(t)$.
	The matrix ODE appears in many applications, e.g., in nuclear magnetic resonance spectroscopy (NMR) \cite{HaSp98}.
	In NMR $\tilde{A}(t) = - 2\pi \imath H(t)$, where $H(t)$ is the Hamiltonian of the system describing the dynamics of the nuclear spins of some sample in a magnetic field. 
	Hamiltonians appearing in NMR are of size $2^\ell \times 2^\ell$ for a system with $\ell$ spins and is usually sparse since spins only interact with close neighbors.\\
	The new method for the scalar ODE \eqref{eq:ODE} is based on expressing this ODE and its solution $\tilde{u}(t)$ in the so called $\star$-algebra, which is equipped with the $\star$-product \cite{GiPo20}.
	Section \ref{sec:starAlgebra} introduces the $\star$-product, which is an integral over two bivariate distributions, and the $\star$-algebra generated by this product.
	In this algebra, the solution is given by a closed form expression \cite{GiPo21}.
	In principle, this expression can be computed symbolically, however, in general, the computation is too complex for practical purposes.\\
	Therefore, a numerical procedure is proposed that computes a discretization of this solution in the matrix algebra, equipped with the usual matrix-matrix product.
	The key to going from the $\star$-algebra to the matrix algebra is finding a suitable discretization of the $\star$-product, which can be based on a quadrature rule \cite{CiPoRZVB22} or on the expansion of the bivariate distributions in a basis of orthonormal polynomials.
	The latter approach is followed in this paper, a basis of orthonormal Legendre polynomials is chosen and the resulting discretization is discussed in Section \ref{sec:discr}.
	Section \ref{sec:approx} describes how to obtain an accurate approximation inside the matrix algebra and illustrates the method with some numerical experiments.
	
	\section{$\star$-product}\label{sec:starAlgebra}
	The solution $\tilde{u}(t)$ can be expressed as a closed form expression in the $\star$-algebra.
	First, all functions and the ODE \eqref{eq:ODE} are represented in the space
	$\mathcal{D}(\mathcal{I})$, which is spanned by all distributions $d$ of the form $d(t,s) = \tilde{d}(t,s)\Theta(t-s) + \sum_{i=0}^N \tilde{d}_i(t,s) \delta^{(i)}(t-s)$, with $N\geq 0$ and $\tilde{d},\tilde{d}_i$ are smooth bivariate functions over $\mathcal{I}\times \mathcal{I}$.
	In $\mathcal{D}(\mathcal{I})$ the ODE becomes
	\begin{equation}\label{eq:ODE_bivariate}
		\frac{d}{dt} u(t,s) = f(t,s) u(t,s),\quad u(s,s) = 1,\quad t,s\in\left[-1,1\right].
	\end{equation}
	The function is $f(t,s)=\tilde{f}(t)\Theta(t-s)$, where the Heaviside function $\Theta(t-s) := \begin{cases}
		1, \quad \text{if }t\geq s,\\
		0, \quad \text{if } t<s\end{cases}$ enforces the starting time, which is given by the parameter $s$.
	The space of smooth functions $\tilde{f}(t)$ multiplied with the Heaviside function $\Theta(t-s)$ is denoted by $C^\infty_{\Theta} := \{f: f(t,s) = \tilde{f}(t)\Theta(t-s), \quad \tilde{f}\in C^\infty\}$.
	For $f,g\in C^{\infty}_{\Theta}$ the $\star$-product, defined as
	\begin{equation}\label{eq:starProd}
		f(t,s)\star g(t,s) := \int_{-1}^1 g(t,\tau) f(\tau,t) d\tau \in C^{\infty}_{\Theta},
	\end{equation}
	is closed.
	In the larger space $\mathcal{D}(\left[-1,1\right])$ the $\star$-product is also closed and an inverse for this product, $f(t,s)^{-\star}$ such that $f(t,s)\star f(t,s)^{-\star} = \delta(t-s)$, exists under certain conditions on $f(t,s)$ \cite{GiPo20}.
	The corresponding identity element is the Dirac impulse $\delta(t-s) = \begin{cases}
		1, \quad \text{if }t= s,\\
		0, \quad \text{else}
	\end{cases}.$
	The elements and operations which compose the $\star$-algebra are given in Table \ref{table:starAlgebra}, for details we refer to \cite{GiPo20}.
	\begin{table}[!ht]
		\centering
		\begin{tabular}{l|l}
			Operation/element & Properties\\				\hline
			$f(t,s) \star g(t,s) = \int_{-1}^1 g(t,\tau) f(\tau,t) d\tau$             &  $\star$-product is closed in $\mathcal{D}(\mathcal{I})$ \\
			$f + g$           & addition is closed in $\mathcal{D}(\mathcal{I})$ \\
			$1_{\star} = \delta(t-s)$ (Diract delta) &  identity element for the $\star$-product      \\
			$f^{\star-1}(t,s)$                    & $\star$-inverse \cite{GiPo20}    \\
			$R_\star(f)(t,s):=(1_{\star}- f)^{\star-1}(t,s)$   &  $\star$-resolvent \cite{GiPo20}                                      
		\end{tabular}
		\caption{Summary of elements and operations which compose the $\star$-algebra. The functions $f,g,q\in \mathcal{D}(\left[-1,1\right])$.}
		\label{table:starAlgebra}
	\end{table}
	
	In the $\star$-algebra the solution $\tilde{u}(t)$ to \eqref{eq:ODE} is given by evaluating the solution $u(t,s)$ in $s=-1$:
	\begin{equation*}
		\tilde{u}(t) = u(t,s)\vert_{s=-1}, \quad \text{with }u(t,s) = \Theta(t-s)\star R_{\star}(\tilde{f}(t)\Theta(t-s)).
	\end{equation*}
	Computing $u(t,s)$ symbolically is usually too complex, therefore we will discretize the problem and compute an approximation to the solution by numerical computation.

	\section{From $\star$-algebra to matrix algebra}\label{sec:discr}
	The key to going from the $\star$-algebra to the matrix algebra is discretizing the $\star$-product.
	One way to discretize the $\star$-product is by the use of quadrature rules \cite{CiPoRZVB22}.
	Another way is by expansion in a basis of orthonormal polynomials (ONPs), which is the topic of this paper.
	A natural choice of ONPs is the sequence of Legendre polynomials, these polynomials are discussed in Section \ref{sec:LegPoly}.
	Using these polynomials as a basis, Section \ref{sec:expansion} describes how distributions living in $C^{\infty}_\Theta$ can be expanded as a series and Section \ref{sec:coeffs} provides details on how to compute the coefficients in this series.
	The discretization of the $\star$-product that follows from the Legendre basis expansion and the resulting matrix algebra are the topic of Section \ref{sec:finBasis}.
	
	\subsection{Legendre polynomials}\label{sec:LegPoly}
	The sequence of Legendre polynomials $\{p_k\}_k$ satisfies the orthogonality conditions
	\begin{equation*}
		\int_{-1}^1 p_k(x) p_{\ell}(x) dx \begin{cases}
			=0,\quad \text{if } k\neq \ell\\
			\neq 0,\quad \text{if } k=\ell
		\end{cases}.
	\end{equation*}
	We choose to normalize this sequence such that $\int_{-1}^1 p^2_k(x)dx = 1$.
	These orthonormal Legendre polynomials satisfy the property stated in Lemma \ref{prop:int_shift_Legendre}, which is paramount to efficiently computing the Legendre series expansion of funtions in $C^\infty_\Theta$.
	\begin{lemma}\label{prop:int_shift_Legendre}
		Consider orthonormal Legendre polynomials $\{p_\ell(t)\}_{l}$.
		Then for $-1\leq\tau\leq 1$ the following equality holds, for $\ell>0$,
		\begin{equation*}
			\int_{-1}^{\tau} p_\ell(\rho)d\rho = \frac{1}{\sqrt{2\ell+1}} \left(\frac{1}{\sqrt{2\ell+3}}p_{\ell+1}(\tau) - \frac{1}{\sqrt{2\ell-1}} p_{\ell-1}(\tau)  \right)
		\end{equation*}
		and for $\ell=0 $
		\begin{equation*}
			\int_{-1}^{\tau} p_0(\rho)d\rho = \frac{1}{\sqrt{3}} p_1(\tau) + p_0(\tau).
		\end{equation*}
	\end{lemma}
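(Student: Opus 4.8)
The plan is to reduce everything to the classical (un-normalized) Legendre polynomials $P_\ell$, for which the required derivative identity is standard, and then rescale into the orthonormal basis. Writing the normalization constant explicitly, the chosen convention $\int_{-1}^1 p_\ell^2 = 1$ together with the well-known value $\int_{-1}^1 P_\ell^2 = 2/(2\ell+1)$ forces $p_\ell = \sqrt{(2\ell+1)/2}\,P_\ell$, equivalently $P_\ell = \sqrt{2/(2\ell+1)}\,p_\ell$. This identification is the only bridge I need between the two normalizations.

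The key ingredient is the classical identity
\[
(2\ell+1)P_\ell(x) = \frac{d}{dx}\big(P_{\ell+1}(x) - P_{\ell-1}(x)\big), \qquad \ell \geq 1,
\]
which I would recall from the standard recurrence relations for Legendre polynomials. Integrating it from $-1$ to $\tau$ and using the boundary values $P_n(-1) = (-1)^n$ gives
\[
(2\ell+1)\int_{-1}^{\tau} P_\ell(\rho)\,d\rho = \big(P_{\ell+1}(\tau)-P_{\ell-1}(\tau)\big) - \big((-1)^{\ell+1}-(-1)^{\ell-1}\big).
\]
Since $(-1)^{\ell+1} = (-1)^{\ell-1}$, the boundary contribution vanishes for $\ell\geq 1$, leaving a clean two-term expression. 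Substituting the rescaling $P_m = \sqrt{2/(2m+1)}\,p_m$ on both sides and collecting the resulting square-root factors then yields the stated formula for $\ell>0$.

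The case $\ell = 0$ must be treated separately, because the derivative identity would involve the undefined object $P_{-1}$ and because the boundary term no longer cancels. Here I would compute directly: $P_0 \equiv 1$ gives $\int_{-1}^{\tau} P_0\,d\rho = \tau+1$, and since $P_1(x)=x$ this equals $P_1(\tau)+P_0(\tau)$; rescaling into the orthonormal basis produces precisely the extra constant term $p_0(\tau)$ appearing in the statement. In fact, that surviving boundary value is exactly what distinguishes the $\ell=0$ formula from the generic one.

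I do not anticipate a genuine obstacle: the only nontrivial input is the derivative identity, after which the argument is a one-line integration plus bookkeeping of the normalization constants. The point most likely to need care is keeping the factors $\sqrt{2\ell-1}$, $\sqrt{2\ell+1}$, and $\sqrt{2\ell+3}$ consistent when passing from $P_\ell$ to $p_\ell$, and verifying that the product $\sqrt{(2\ell+1)/2}\cdot(2\ell+1)^{-1}\cdot\sqrt{2/(2\ell\pm1)}$ simplifies to the claimed $(2\ell+1)^{-1/2}(2\ell\pm1)^{-1/2}$ coefficients.
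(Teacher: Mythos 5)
Your proof is correct: the paper states this lemma without proof (it is a standard property of Legendre polynomials), and your derivation via the classical identity $(2\ell+1)P_\ell = \frac{d}{dx}(P_{\ell+1}-P_{\ell-1})$, the boundary values $P_n(-1)=(-1)^n$, and the rescaling $p_\ell = \sqrt{(2\ell+1)/2}\,P_\ell$ is exactly the expected argument. The constants work out as claimed, and your separate direct computation for $\ell=0$ correctly accounts for the non-cancelling boundary term.
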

	The expansion of a given function $\tilde{f}(x)$ in a basis of Legendre polynomials is given by the series
	\begin{equation*}
		\tilde{f}(x) = \sum_{k=0}^{\infty} \tilde{f}_k p_k(x), \quad \text{with coefficients }\tilde{f}_k = \int_{-1}^1 \tilde{f}(\tau) p_k(\tau) d\tau.
	\end{equation*}
	For functions finite and continuous on $\mathcal{I}$ the Legendre series expansion is uniformly convergent \cite{WaXi12}.
	All functions we encountered in our applications are entire functions, i.e., analytic on the whole complex plane.
	The Legendre series expansions of entire functions converges faster than geometric \cite{Tr13}.
	For more details on the rate of convergence for analytic and differentiable functions see \cite{WaXi12}.\\
	Consider the truncated Legendre series $\hat{f}_N(x) := \sum_{k=0}^N \tilde{f}_k p_k(x)$.
	An upper bound for the error of $\hat{f}_N(x)$ to $\tilde{f}(x)$ can be obtained by noting that $\vert p_k(x)\vert \leq \sqrt{\frac{2k+1}{2}}$ on $\mathcal{I}$:
	\begin{equation*}
		\Vert \tilde{f}(x) - \hat{f}_N(x)  \Vert_\infty = \sum_{k=N+1}^\infty \hat{f}_k p_k(x) \leq \sum_{k=N+1}^\infty \vert \hat{f}_k\vert \sqrt{\frac{2k+1}{2}}.
	\end{equation*}
	The fast decay of the magnitude of the coefficients of smooth functions cancels out the square root growth as $k$ increases.
	Thus, if the series $\hat{f}_N(x)$ contains all the coefficients above machine precision appearing in the Legendre series expansion of $\tilde{f}(x)$, it represents $\tilde{f}(x)$ up to high accuracy.
	To be able to use the FFT, we will consider interpolating Legendre series instead of truncated Legendre series, the coefficients $\{f_k\}_{k=0}^{N-1}$ of the interpolating Legendre series can be computed using \verb|chebfun| \cite{DrHaTr14} at a complexity of $\mathcal{O}(N\log^2 (N))$.
	The accuracy of the interpolating Legendre series is expected to be close to the truncated Legendre series \cite[Chapter 4]{Tr13}, i.e., $\vert\tilde{f}_k-f_k\vert$ is small for $k=0,1,\dots N-1$.
	
	\subsection{Expansion of distributions living in $C^{\infty}_{\Theta}$}\label{sec:expansion}
	The expansion of $\tilde{f}(t) \Theta(t-s) = f(t,s)\in C^{\infty}_{\Theta}$ in Legendre bases is given by
	\begin{equation*}
		f(t,s) = \sum_{k=0}^{\infty} \sum_{\ell=0}^{\infty} f_{k,\ell} p_k(t) p_\ell(s), \quad \text{for } t\neq s, \quad \text{with coefficients } f_{k,\ell} = \int_{-1}^1 \int_{-1}^1 f(\tau,\rho) p_k(\tau) p_\ell(\rho) d\tau d\rho.
	\end{equation*}
	The coefficients form the coefficient matrix $F :=\left[f_{k,\ell}\right]_{k,\ell=1}^\infty$ which represents $f(t,s)$ in the bases of Legendre polynomials:
	\begin{equation*}
		f(t,s) \approx \begin{bmatrix}
			p_0(t)& p_1(t) &p_2(t) &\dots 
		\end{bmatrix} F \begin{bmatrix}
			p_0(s)\\
			p_1(s)\\
			p_2(s)\\
			\vdots
		\end{bmatrix}.
	\end{equation*}
	In the sequel, we will work with a truncation of this double series, we consider the $M\times M$ leading principal submatrix $F_M$ of $F$, such that $f(t,s)$ is represented by
	\begin{equation*}
		f(t,s) \approx \sum_{k=0}^{M-1} \sum_{\ell=0}^{M-1} f_{k,\ell} p_k(t) p_\ell(s)= \begin{bmatrix}
			p_0(t)& p_1(t)& p_2(t)& \dots & p_{M-1}(t)
		\end{bmatrix} F_M \begin{bmatrix}
			p_0(s)\\
			p_1(s)\\
			p_2(s)\\
			\vdots\\
			p_{M-1}(s)
		\end{bmatrix}.
	\end{equation*}
	Symbolic computation of the coefficients $f_{k,\ell}$ can be slow.
	In general, a numerical computation of the coefficients is needed, this can be achieved by, e.g., using a Gauss-Legendre quadrature rule to discretize the double integral.
	The number of nodes required to achieve accuracy close to machine precision depends on the given function $\tilde{f}(t)$.
	However, there is a more straightforward and more efficient approach, which is the topic of next section.

	\subsection{Computing basis coefficient matrices}\label{sec:coeffs}
	The coefficient matrix $F_M$ can be computed up to high precision.
	The procedure to compute $F_M$ requires as input only the function $\tilde{f}(t)$, size of the matrix $M$ and the chosen accuracy for the entries of $F_M$.
	The coefficients are obtained in two steps.
	First $\tilde{f}(t)\in C^{\infty}$ is represented, up to machine precision, by its interpolating Legendre series $\tilde{f}(t) \approx \sum_{k=0}^{N-1} {f}_k p_k(x)$.
	This series is obtained by using \verb|chebfun| \cite{DrHaTr14}, which automatically chooses the number of terms $N$.
	Second, since the series can be expressed as $f(t,s) \approx \sum_{d=0}^{N-1} \hat{f}_d \sum_{k=0}^{\infty}  \sum_{\ell=0}^{\infty} p_d(t) \Theta(t-s) p_k(t) p_\ell(s)$, it suffices to compute the coefficients for Legendre polynomials of degree $d$ in $C^\infty_{\Theta}$, i.e., $p_d(t)\Theta(t-s)$:
	\begin{equation*}
		b_{k,\ell}^{(d)} := \int_{-1}^1 \int_{-1}^1 p_d(\tau) \Theta(\tau-\rho) p_k(\tau) p_\ell(\rho) d\rho d\tau.
	\end{equation*}
	The corresponding basis coefficient matrices $\{B^{(d)}\}_{d=0}^{N-1}$ are infinite matrices $B^{(d)} := \left[b_{k,\ell}^{(d)} \right]_{k,l=0}^\infty$.
	Note that these basis coefficient matrices do not change for different $f(t,s)$, we only require the expansion coefficients $\{{f}_d\}_{d=0}^{N-1}$ of $\tilde{f}(t)$ in a basis of orthonormal Legendre polynomials in order to compute $F\approx F^{(N)} = \sum_{d=0}^{N-1} \hat{f}_d B^{(d)}$, where superscript $(N)$ denotes the number of terms used in the Legendre expansion of $\tilde{f}(t)$.
	This approximation is convenient since, as is shown in the following, it is possible to express $b_{k,\ell}^{(d)}$ analytically.
	\begin{lemma}[Integral of the product of three Legendre polynomials \cite{GiJeZe88}]\label{prop:intLeg}
		Consider integers $a,b,c\geq 0$ and set $s:=\frac{a+b+c}{2}$.
		The integral of the product of three orthonormal Legendre polynomials is
		\begin{equation*}
			\mathcal{P}_{a,b,c} := \int_{-1}^{1} {p}_a(\rho) {p}_b(\rho) {p}_c(\rho) d\rho 
			= 
			\begin{cases}
				0\quad \textnormal{if } a+b+c \textnormal{ odd or } s<\max(a,b,c), \textnormal{ or }a<\vert b-c \vert,\\ \frac{2(2a+1)(2b+1)(2c+1)}{\sqrt{8}(a+b+c)}\begin{pmatrix}
					2(s-a)\\
					s-a
				\end{pmatrix} \begin{pmatrix}
					2(s-b)\\
					s-b
				\end{pmatrix} \begin{pmatrix}
					2(s-c)\\
					s-c
				\end{pmatrix} \begin{pmatrix}
					2s\\
					s
				\end{pmatrix}^{-1} \enspace \textnormal{else}.
			\end{cases}
		\end{equation*}
	\end{lemma}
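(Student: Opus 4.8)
The plan is to reduce the claim to the classical Gaunt integral for the (non-normalised) Legendre polynomials and then extract the coefficient via the linearisation formula for a product of two Legendre polynomials. Write $P_n$ for the classical Legendre polynomials normalised by $\int_{-1}^1 P_n(x)^2\,dx = \frac{2}{2n+1}$, so that the orthonormal polynomials of the statement are $p_n = \sqrt{\frac{2n+1}{2}}\,P_n$. Pulling the three normalisation constants out of the integral gives $\mathcal{P}_{a,b,c} = \frac{\sqrt{(2a+1)(2b+1)(2c+1)}}{\sqrt{8}}\,\int_{-1}^1 P_a(x)P_b(x)P_c(x)\,dx$, so the entire problem reduces to evaluating the triple product $I_{a,b,c} := \int_{-1}^1 P_a P_b P_c\,dx$ and reattaching this prefactor at the end.

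First I would dispose of the vanishing cases, which make up the first branch. Since each $P_n$ has the parity of $n$, the integrand has the parity of $a+b+c$; when $a+b+c$ is odd it is an odd function on the symmetric interval $[-1,1]$, so $I_{a,b,c}=0$. For the triangle-type exclusions I would use that $P_b P_c$ is a polynomial of degree $b+c$ whose Legendre expansion involves only the degrees $|b-c|,|b-c|+2,\dots,b+c$; orthogonality of $P_a$ against every $P_m$ with $m\ne a$ then forces $I_{a,b,c}=0$ unless $|b-c|\le a\le b+c$. The failure of this triangle inequality is exactly the condition that the largest index exceed the sum of the other two, i.e. $\max(a,b,c)>s$, which is the stated $s<\max(a,b,c)$; the displayed $a<|b-c|$ is a (redundant) special case of the same failure.

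For the generic branch I would invoke Adams' linearisation identity $P_b(x)P_c(x) = \sum_{r=0}^{\min(b,c)} \gamma_r\,P_{b+c-2r}(x)$, whose coefficients $\gamma_r$ are built from the central-binomial quantities $A_m = \binom{2m}{m}4^{-m}$. Orthogonality leaves only the term with $b+c-2r=a$, i.e. $r=s-a$, and multiplying by $\int_{-1}^1 P_a^2 = \frac{2}{2a+1}$ expresses $I_{a,b,c}$ through $A_{s-a},A_{s-b},A_{s-c},A_s$. Because $(s-a)+(s-b)+(s-c)=s$, the powers of $4$ cancel and exactly the central binomial coefficients $\binom{2(s-a)}{s-a}$, $\binom{2(s-b)}{s-b}$, $\binom{2(s-c)}{s-c}$ and $\binom{2s}{s}^{-1}$ of the statement survive. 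An equivalent route is to pass through the Wigner $3j$ identity that writes $I_{a,b,c}$ as twice the square of the $3j$ symbol with all three lower indices zero and then substitute its classical closed form for even $a+b+c$; the factorial-to-binomial rewriting is the same either way. Reinstating the normalisation prefactor produces the claimed expression.

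The main obstacle is this generic branch: essentially all the content of the lemma lives in the linearisation coefficient $\gamma_r$ (equivalently in the closed form of the $3j$ symbol). I would either treat Adams' formula as a known identity, or prove it independently — most cleanly from the generating function $\sum_n P_n(x)t^n = (1-2xt+t^2)^{-1/2}$ by forming a triple product and comparing coefficients. The remaining delicate point is purely bookkeeping: performing the reindexing $r\mapsto s-a$, converting factorials into the central binomial form, and checking that the support of the surviving term coincides with the parity and triangle constraints of the first branch. Because the normalisation constant and this reindexing are where transcription slips most easily occur, I would sanity-check the final constant against a small case such as $\int_{-1}^1 P_1 P_2 P_1\,dx$ before trusting the general formula.
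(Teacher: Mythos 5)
The paper does not prove this lemma at all: it is quoted from the cited reference \cite{GiJeZe88} and used as a black box, so your proposal is doing strictly more work than the text. Your route --- reduce to the classical $P_n$ with $p_n=\sqrt{(2n+1)/2}\,P_n$, dispose of the vanishing branch by parity and by orthogonality against the linearisation of $P_bP_c$, then extract the single surviving coefficient from Adams'/Neumann's formula (equivalently the squared $3j$ symbol) --- is the standard derivation and is sound; your observation that the condition $a<|b-c|$ is subsumed by $s<\max(a,b,c)$ is also correct. One substantive remark: carrying your computation to the end yields
\begin{equation*}
\mathcal{P}_{a,b,c}=\frac{2\sqrt{(2a+1)(2b+1)(2c+1)}}{\sqrt{8}\,(a+b+c+1)}\binom{2(s-a)}{s-a}\binom{2(s-b)}{s-b}\binom{2(s-c)}{s-c}\binom{2s}{s}^{-1},
\end{equation*}
that is, a square root over the product $(2a+1)(2b+1)(2c+1)$ and the denominator $a+b+c+1$, rather than the bare product and the denominator $a+b+c$ printed in the lemma. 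The printed version cannot be right as it stands: it divides by zero at $a=b=c=0$ (where $\mathcal{P}_{0,0,0}=1/\sqrt{2}$), and at $(a,b,c)=(2,1,1)$ it gives $15/(2\sqrt{2})$ instead of the true value $2/\sqrt{10}$. The small-case sanity check you propose at the end would catch exactly these transcription slips, so your argument establishes the correct form of the identity rather than the form as stated; there is no gap in your reasoning beyond the (reasonable) decision to take the linearisation coefficients as known or to derive them from the generating function.
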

	\begin{theorem}[Coefficients of Legendre polynomials in $C^{\infty}_\Theta$]\label{theorem:FormulaLegCoeffs}
		Let $p_d(t)$ be the orthonormal Legendre polynomial of degree $d$ and ${\mathcal{P}}_{a,b,c}$ as in Property \ref{prop:intLeg}.
		Then the coefficients $b_{k,l}^{(d)}$ of $p_d(t)\Theta(t-s)$ expanded in orthonormal Legendre bases $\{p_k(t)\}_k$ and $\{p_\ell(s)\}$, i.e., $p_d(t)\Theta(t-s) = \sum_{k=0}^\infty \sum_{\ell=0}^\infty b_{k,\ell}^{(d)} p_k(t) p_\ell(s)$ are given by		
		\begin{equation}\label{eq:LegBasisCoeffs}
			b^{(d)}_{k,\ell}==  \frac{1}{\sqrt{2\ell+1}} \left[ \frac{1}{\sqrt{2\ell+3}}\mathcal{P}_{d,k,\ell+1} - \frac{1}{\sqrt{2\ell-1}}\mathcal{P}_{d,k,\ell-1}\right].
		\end{equation}
	\end{theorem}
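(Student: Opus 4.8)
The plan is to reduce the double integral defining $b_{k,\ell}^{(d)}$ to a single integral of three Legendre polynomials, which Lemma~\ref{prop:intLeg} already evaluates in closed form. First I would carry out the inner integration over $\rho$. Since $\Theta(\tau-\rho)$ equals $1$ precisely when $\rho\leq\tau$ and $0$ otherwise, the Heaviside factor simply restricts the $\rho$-integration domain, giving
\begin{equation*}
  \int_{-1}^{1}\Theta(\tau-\rho)\,p_\ell(\rho)\,d\rho=\int_{-1}^{\tau}p_\ell(\rho)\,d\rho.
\end{equation*}
Because the integrand is a polynomial multiplied by a bounded, piecewise-constant factor over the compact square $\mathcal{I}\times\mathcal{I}$, no convergence or Fubini subtleties arise, and this reduction is fully rigorous.

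Next I would invoke Lemma~\ref{prop:int_shift_Legendre} to replace this antiderivative by an explicit combination of Legendre polynomials of one higher and one lower degree. For $\ell>0$ this yields
\begin{equation*}
  \int_{-1}^{\tau}p_\ell(\rho)\,d\rho=\frac{1}{\sqrt{2\ell+1}}\left(\frac{1}{\sqrt{2\ell+3}}\,p_{\ell+1}(\tau)-\frac{1}{\sqrt{2\ell-1}}\,p_{\ell-1}(\tau)\right).
\end{equation*}
Substituting this back into the definition of $b_{k,\ell}^{(d)}$ and pulling the constant coefficients outside the remaining $\tau$-integral leaves two integrals of the form $\int_{-1}^{1}p_d(\tau)\,p_k(\tau)\,p_{\ell\pm1}(\tau)\,d\tau$, which are exactly $\mathcal{P}_{d,k,\ell+1}$ and $\mathcal{P}_{d,k,\ell-1}$. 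Collecting terms then produces the claimed formula~\eqref{eq:LegBasisCoeffs}, and the closed form of Lemma~\ref{prop:intLeg} supplies each factor explicitly.

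The computation itself is essentially bookkeeping; the only genuine care is needed at the boundary case $\ell=0$, which is the step I expect to be the main obstacle. There the three-term relation must be replaced by the second identity in Lemma~\ref{prop:int_shift_Legendre}, since the index $\ell-1$ and the coefficient $1/\sqrt{2\ell-1}$ degenerate. Substituting $\int_{-1}^{\tau}p_0(\rho)\,d\rho=\tfrac{1}{\sqrt{3}}p_1(\tau)+p_0(\tau)$ produces the genuinely distinct value $b_{k,0}^{(d)}=\tfrac{1}{\sqrt{3}}\mathcal{P}_{d,k,1}+\mathcal{P}_{d,k,0}$, whose extra diagonal contribution $\mathcal{P}_{d,k,0}$ is not recovered by naively setting the $\ell-1$ term to zero; hence~\eqref{eq:LegBasisCoeffs} is to be read as the formula valid for $\ell>0$, with the case $\ell=0$ handled separately. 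All resulting triple-product integrals are then closed by Lemma~\ref{prop:intLeg}, so no further estimation is required.
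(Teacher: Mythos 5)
Your proof is correct and follows essentially the same route as the paper's: use orthonormality to reduce the coefficient to a double integral, collapse the Heaviside factor into $\int_{-1}^{\tau}p_\ell(\rho)\,d\rho$, apply Lemma~\ref{prop:int_shift_Legendre}, and identify the surviving integrals with $\mathcal{P}_{d,k,\ell\pm1}$. Your explicit handling of the $\ell=0$ case is in fact a point the paper glosses over, since formula~\eqref{eq:LegBasisCoeffs} degenerates there and the paper's proof tacitly assumes $\ell>0$.
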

	\begin{proof}
		By orthonormality of the Legendre polynomials, we can write the coefficients as
		\begin{align*}
			b^{(d)}_{k,\ell} &= \int_{-1}^1  p_d(\tau) p_k(\tau) \left(\int_{-1}^1\Theta(\tau-\rho)  p_\ell(\rho) d\rho\right) d\tau = \int_{-1}^1  p_d(\tau) p_k(\tau) \underbrace{\left(\int_{-1}^\tau p_\ell(\rho) d\rho\right)}_{\text{Apply Lemma \ref{prop:int_shift_Legendre}}} d\tau \\
			&=  \frac{1}{\sqrt{2\ell+1}} \left[ \frac{1}{\sqrt{2\ell+3}}\int_{-1}^1  p_d(\tau) p_k(\tau) p_{\ell+1}(\tau) d\tau - \frac{1}{\sqrt{2\ell-1}}\int_{-1}^1  p_d(\tau) p_k(\tau)  p_{\ell-1}(\tau)  d\tau\right]\\
			&=  \frac{1}{\sqrt{2\ell+1}} \left[ \frac{1}{\sqrt{2\ell+3}}\mathcal{P}_{d,k,\ell+1} - \frac{1}{\sqrt{2\ell-1}}\mathcal{P}_{d,k,\ell-1}\right].
		\end{align*}
	\end{proof}
	An immediate consequence of this formula is that $B^{(d)}$ is a banded matrix with bandwidth $(d+1)$ and therefore $F^{(N)}$ has bandwidth $N$.
	In the sequel, the truncated coefficient matrix $F^{(N)}_M\in\mathbb{C}^{M\times M}$ is used, which is the $M\times M$ leading principal submatrix of $F^{(N)}$.
	Choosing an adequate value of $M$ is outside the scope of this paper and is the subject of ongoing research.
	
	\subsection{A finite basis}\label{sec:finBasis}
	Once $f(t,s)$ is replaced by its truncated coefficient matrix $F^{(N)}_M$, the symbolic operations in the $\star$-algebra are replaced by operations involving matrices.
	Consider $f(t,s),g(t,s)$ and their truncated coefficient matrices $F^{(N)}_M, G^{(N)}_M$, respectively.
	The coefficient matrix of the result of the $\star$-product $q(t,s) = f(t,s)\star g(t,s)$ is then approximately given by the coefficient matrix $Q^{(N)}_M = F^{(N)}_M G^{(N)}_M$, i.e., the usual matrix matrix product.
	This is obtained by plugging in the truncated series in the $\star$-product:
	\begin{align*}
		&q(t,s)=f(t,s)\star g(t,s) = \int_{-1}^1 f(t,\tau) g(\tau,s) d\tau \\
		&\approx \int_{-1}^1 \left(\sum_{k=0}^{M-1} \sum_{\ell=0}^{M-1} f_{k,\ell} p_k(t) p_\ell(\tau)\right) \left(\sum_{k=0}^{M-1} \sum_{\ell=0}^{M-1} g_{k,\ell} p_k(\tau) p_\ell(s)\right) d\tau\\
		&= \begin{bmatrix}
			p_0(t) & \dots & p_M(t)
		\end{bmatrix}F^{(N)}_M  
		\begin{bmatrix}
			\int_{-1}^1 p_0(\tau)p_0(\tau)d\tau & \dots & \int_{-1}^1 p_0(\tau)p_\infty(\tau)d\tau\\
			\int_{-1}^1 p_1(\tau)p_0(\tau) d\tau& \dots & \int_{-1}^1 p_1(\tau)p_\infty(\tau)d\tau\\
			\vdots & & \vdots \\
			\int_{-1}^1 p_{M-1}(\tau) p_0(\tau)d\tau & \dots &\int_{-1}^1  p_M(\tau) p_{M-1}(\tau)d\tau
		\end{bmatrix}
		G^{(N)}_M \begin{bmatrix}
			p_0(s)\\
			\vdots\\
			p_{M-1}(s)
		\end{bmatrix} \\
		&= \begin{bmatrix}
			p_0(t) & \dots & p_{M-1}(t)
		\end{bmatrix} 
		\underbrace{F^{(N)}_M G^{(N)}_M}_{=Q^{(N)}_M} \begin{bmatrix}
			p_0(s)\\
			\vdots\\
			p_{M-1}(s)
		\end{bmatrix}.
	\end{align*}
	Table \ref{table:matrixOps} describes the matrix algebra for the coefficient matrices of functions in $\mathcal{D}$($\mathcal{I}$).
	This matrix algebra is, in some sense, the discretization of the $\star$-algebra.
	The coefficient matrix for the expansion of $\Theta(t-s)$ in the Legendre bases is denoted by $H_M\in\mathcal{C}^{M\times M}$.
	This matrix appears in the coefficient matrix $U^{(N)}_M = H_M (I_M-F_M^{(N)})^{-1}$ which represents the series approximating the solution $u(t,s) = \Theta(t-s)\star (1_{\star}- f)^{\star-1}(t,s)$.
	This expression is used in the next section to approximate the solution $\tilde{u}(t)$.
	\begin{table}[!ht]
		%		\centering
		\begin{tabular}{l|l}
			$f(t,s)$ & $F_M$ \\ \hline
			$\star$-operation/elements & matrix operation/elements\\				\hline
			$q(t,s) = f(t,s) \star g(t,s)$             &  $Q_M = F_M G_M$ \\
			$f + g$           & $F_M+G_M$  \\
			$1_{\star} := \delta(t-s)$ &   $I_M$, identity matrix       \\
			$f^{\star-1}(t,s)$                    & $F_M^{-1}$    \\
			$R_\star(f)(t,s):=(1_{\star}- f)^{\star-1}(t,s)$   &  $R(F_M) := (I_M-F_M)^{-1}$\\ \hline \hline
			Solution to ODE \eqref{eq:ODE_bivariate} & Approximate solution\\ \hline
			$u(t,s) = \Theta(t-s)\star (1_{\star}- f)^{\star-1}(t,s)$ & $U_M = H_M (I_M-F_M)^{-1}$
		\end{tabular}
		\caption{Matrix algebra for the coefficient matrices of functions in $\mathcal{D}(\mathcal{I})$ and corresponding operations and elements in the $\star$-algebra.}
		\label{table:matrixOps}
	\end{table}

	\section{Approximation in matrix framework}\label{sec:approx}
	For the solution $u(t,s)$ of \eqref{eq:ODE_bivariate} the truncated coefficient matrix in the Legendre bases is given by $U_M^{(N)} = H_M (I_M-F_M^{(N)})^{-1}$.
	Since the solution of interest $\tilde{u}(t) = u(t,s)\vert_{s=-1}$, its coefficients in Legendre basis $u^{(N)} = \begin{bmatrix}
		u_0 & u_1 & \dots & u_{M-1}
	\end{bmatrix}$ can be computed by solving the linear system $ (I_M-F_M^{(N)}) y =\begin{bmatrix}
		p_0(-1) &  p_1(-1) & \dots & p_{M-1}(-1)
	\end{bmatrix}^\top$ for $y$ and forming the product $u^{(N)}=H_M y$.
	The first $L$ coefficients in $u\in\mathbb{C}^{M}$ can be computed up to high accuracy, which leads to the approximate series $\tilde{u}(t) \approx \sum_{k=0}^{L} u_k p_k(t)=:\hat{u}_L(t)$.\\
	Only the first $L$ coefficients of $u^{(N)}$ are accurate because of the use of a truncated basis.
	The linear system for $y$ involves an $N$-banded matrix $(I_M-F_M^{(N)})$, which is a truncation of infinite matrices. 
	Thanks to the bandedness of $(I_M-F_M^{(N)})$, the first $M-K$ coefficients in $y$ are computed accurately, i.e., as if these coefficients were computed using the infinite matrices.
	Here, $K$ corresponds to the numerical bandedness of the matrix $(I_M-F_M^{(N)})^{-1}$.
	The multiplication with the tridiagonal $H_M$ then results in the first $L = M-K-1$ coefficients in $u$ being accurately computed.\\
	We illustrate this using an example, consider $\tilde{f}(t) = \cos(4t)$, which can be accurately represented in Legendre basis by the series $\sum_{d=0}^{21}f_d p_d(t)$.
	Therefore, its coefficient matrix $F^{(22)}_{101}\in\mathbb{C}^{101\times 101}$ has band size $N=22$, Figure \ref{fig:bandedMatrices} shows the entries of $F^{(22)}_{101}$ that are larger than machine precision $\epsilon_{\textrm{mach}} \approx 2.2204 e-16$.
	The coefficient matrix $U_{101}^{(22)} = H_{101} (I_{101}-F_{101}^{(22)})^{-1}$ is also shown on this figure and we observe that it has a numerical band size of $K+1=29$ in the trailing part of the matrix.
	This suggests that the first $M-K-1 = 71$ of $u^{(22)}$ are accurate.
	Figure \ref{fig:coeffs} verifies that the first 71 computed coefficients $\{u_k\}_k$ by comparing them to the coefficients of the Legendre expansion of the known exact solution $\tilde{u}(t)$ and by comparing the Legendre series $\hat{u}_n(t):=\sum_{k=0}^n u_k p_k(t)$ to the exact solution $\tilde{u}(t)$ in 1000 equispaced nodes in $\mathcal{I}$.\\
	
	\begin{figure}[!ht]
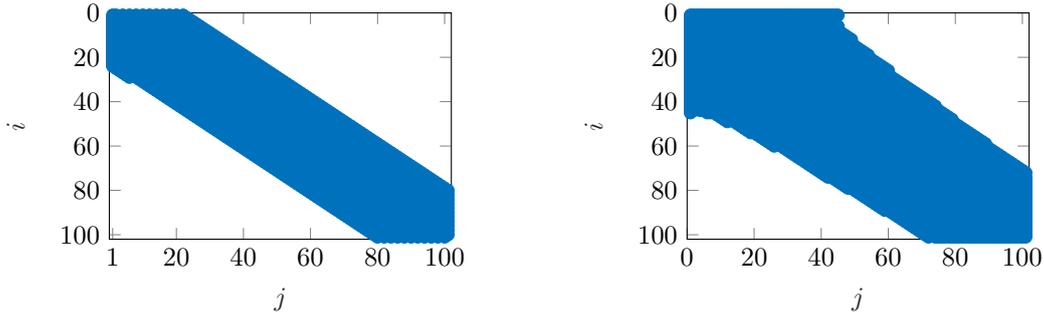

		\centering
		\begin{subfigure}{0.4\textwidth}
			\setlength\figureheight{3cm}
			\setlength\figurewidth{6cm}
			{\input{fig1.tikz}}
		\end{subfigure}
		\hspace{1cm}
		\begin{subfigure}{0.4\textwidth}
			\setlength\figureheight{3cm}
			\setlength\figurewidth{6cm}
			{\input{fig2.tikz}}
		\end{subfigure}
		\caption{Entries in $F^{(22)}_{101}$ (left) for $f(t)=\cos(4t)$ and $U^{(22)}_{101}$ (right) with an amplitude larger than machine precision, $\vert f_{i,j}\vert \leq \epsilon_{\textrm{mach}}$ and $\vert u_{i,j}\vert \leq \epsilon_{\textrm{mach}}$.}
		\label{fig:bandedMatrices}
	\end{figure}

	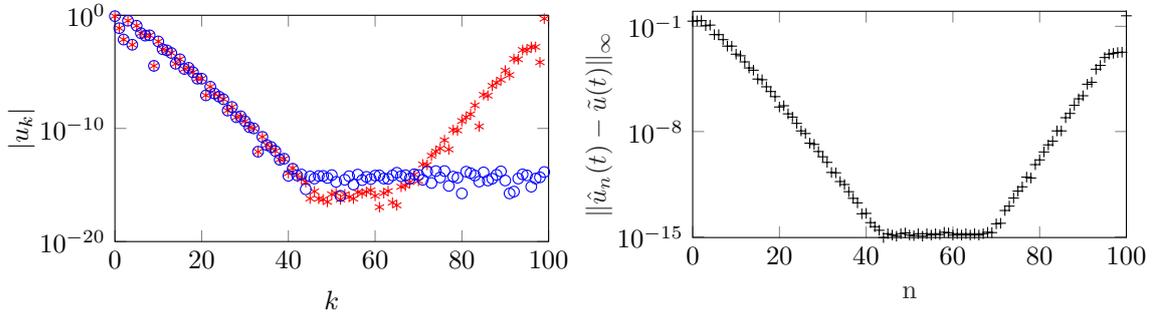
\begin{figure}[!ht]
		\centering
		\hspace{-0.8cm}
		\begin{subfigure}{0.4\textwidth}
			\setlength\figureheight{3cm}
			\setlength\figurewidth{6cm}
			{% This file was created by matlab2tikz.
%
\begin{tikzpicture}
	
	\begin{axis}[%
		width=0.951\figurewidth,
		height=\figureheight,
		at={(0\figurewidth,0\figureheight)},
		scale only axis,
		xmin=0,
		xmax=1e+02,
		ymode=log,
		ymin=1e-20,
		ymax=1,
		yminorticks=true,
		xlabel = $k$,
		ylabel = $\vert u_k\vert$,
		axis background/.style={fill=white}
		]
		\addplot [color=red, draw=none, mark=asterisk, mark options={solid, red}, forget plot]
		table[row sep=crcr]{%
			0	0.84\\
			1	0.073\\
			2	0.0072\\
			3	0.33\\
			4	0.0025\\
			5	0.12\\
			6	0.026\\
			7	0.016\\
			8	0.017\\
			9	3.4e-05\\
			10	0.0049\\
			11	0.001\\
			12	0.00079\\
			13	0.00047\\
			14	5.2e-05\\
			15	0.00013\\
			16	1.8e-05\\
			17	2.3e-05\\
			18	9.2e-06\\
			19	2.5e-06\\
			20	2.5e-06\\
			21	8.4e-08\\
			22	4.9e-07\\
			23	1.2e-07\\
			24	6.5e-08\\
			25	3.8e-08\\
			26	3.8e-09\\
			27	7.9e-09\\
			28	9.8e-10\\
			29	1.2e-09\\
			30	4.3e-10\\
			31	1.3e-10\\
			32	1e-10\\
			33	8.8e-13\\
			34	1.8e-11\\
			35	3.4e-12\\
			36	2.3e-12\\
			37	1e-12\\
			38	1.8e-13\\
			39	2e-13\\
			40	1.1e-14\\
			41	3.1e-14\\
			42	7.9e-15\\
			43	3.3e-15\\
			44	1.8e-15\\
		};
		\addplot [color=red, draw=none, mark=asterisk, mark options={solid, red}, forget plot]
		table[row sep=crcr]{%
			45	6.3e-17\\
			46	2.7e-16\\
			47	6e-17\\
			48	5.3e-17\\
			49	3.2e-17\\
			50	1.5e-16\\
			51	1.9e-16\\
			52	5.3e-17\\
			53	1.5e-16\\
			54	8.7e-17\\
			55	6.2e-17\\
			56	2.3e-16\\
			57	1.8e-16\\
			58	2.5e-16\\
			59	3.4e-16\\
			60	1.1e-16\\
			61	1.1e-17\\
			62	1.9e-16\\
			63	2.9e-16\\
			64	2.8e-17\\
			65	1.6e-17\\
			66	7e-16\\
		};
		\addplot [color=red, draw=none, mark=asterisk, mark options={solid, red}, forget plot]
		table[row sep=crcr]{%
			67	8.5e-16\\
			68	1.8e-15\\
			69	8.3e-15\\
			70	2.3e-15\\
			71	6.5e-14\\
			72	5.9e-14\\
			73	3.9e-13\\
			74	9.1e-13\\
			75	1.7e-12\\
			76	9.2e-12\\
			77	1.4e-12\\
			78	7.3e-11\\
			79	6.7e-11\\
			80	4.5e-10\\
			81	1e-09\\
			82	1.8e-09\\
			83	1.1e-08\\
			84	1.5e-10\\
			85	9.3e-08\\
			86	7.9e-08\\
			87	5.8e-07\\
			88	1.1e-06\\
			89	2e-06\\
			90	1.3e-05\\
			91	5.2e-06\\
			92	0.00014\\
			93	0.00013\\
			94	0.00086\\
			95	0.00084\\
			96	0.0016\\
			97	0.0016\\
			98	7.1e-05\\
			99	0.5\\
		};
		\addplot [color=blue, draw=none, mark=o, mark options={solid, blue}, forget plot]
			table[row sep=crcr]{%
				0	0.84\\
				1	0.073\\
				2	0.0072\\
				3	0.33\\
				4	0.0025\\
				5	0.12\\
				6	0.026\\
				7	0.016\\
				8	0.017\\
				9	3.4e-05\\
				10	0.0049\\
				11	0.001\\
				12	0.00079\\
				13	0.00047\\
				14	5.2e-05\\
				15	0.00013\\
				16	1.8e-05\\
				17	2.3e-05\\
				18	9.2e-06\\
				19	2.5e-06\\
				20	2.5e-06\\
				21	8.4e-08\\
				22	4.9e-07\\
				23	1.2e-07\\
				24	6.5e-08\\
				25	3.8e-08\\
				26	3.8e-09\\
				27	7.9e-09\\
				28	9.8e-10\\
				29	1.2e-09\\
				30	4.3e-10\\
				31	1.3e-10\\
				32	1e-10\\
				33	8.8e-13\\
				34	1.8e-11\\
				35	3.4e-12\\
				36	2.3e-12\\
				37	1e-12\\
				38	1.7e-13\\
				39	2.1e-13\\
				40	6.5e-15\\
				41	2.7e-14\\
				42	7.1e-15\\
				43	8.5e-15\\
				44	4.2e-16\\
				45	5.7e-15\\
				46	3.7e-15\\
				47	6.3e-15\\
				48	6.4e-15\\
				49	4.2e-15\\
				50	7.5e-15\\
				51	1.9e-15\\
				52	1.1e-16\\
				53	2.9e-15\\
				54	6e-15\\
				55	1.1e-15\\
				56	8.3e-15\\
				57	2.7e-15\\
				58	5.2e-15\\
				59	3.4e-15\\
				60	6.8e-15\\
				61	7e-15\\
				62	3.7e-15\\
				63	3.5e-15\\
				64	7e-15\\
				65	1.2e-14\\
				66	5.9e-15\\
				67	7.3e-15\\
				68	3.6e-15\\
				69	8.9e-15\\
				70	7.6e-15\\
				71	2.9e-15\\
				72	1.5e-14\\
				73	1.5e-15\\
				74	1.7e-14\\
				75	4.6e-15\\
				76	1.7e-14\\
				77	8.3e-16\\
				78	5.5e-15\\
				79	4.4e-15\\
				80	1.8e-16\\
				81	1.5e-14\\
				82	1.1e-14\\
				83	6.9e-15\\
				84	2e-15\\
				85	1.3e-14\\
				86	2.5e-15\\
				87	3.6e-15\\
				88	7.9e-15\\
				89	1.6e-14\\
				90	2.5e-15\\
				91	1.8e-16\\
				92	2.6e-16\\
				93	3.7e-15\\
				94	8.7e-15\\
				95	7.1e-15\\
				96	7.1e-16\\
				97	3.7e-15\\
				98	4.8e-15\\
				99	1.4e-14\\
			};
%		\addplot [color=black, line width=2.0pt, forget plot]
%			table[row sep=crcr]{%
%				45	4.1851e-19\\
%				45	1\\
%			};
	\end{axis}
\end{tikzpicture}%}
		\end{subfigure}
		\hspace{1cm}
		\begin{subfigure}{0.4\textwidth}
			\setlength\figureheight{3cm}
			\setlength\figurewidth{6cm}
			{% This file was created by matlab2tikz.
%
\begin{tikzpicture}

\begin{axis}[%
width=0.951\figurewidth,
height=\figureheight,
at={(0\figurewidth,0\figureheight)},
scale only axis,
xmin=0,
xmax=1e+02,
xlabel style={font=\color{white!15!black}},
xlabel={n},
ymode=log,
ymin=8.9e-16,
ymax=1,
yminorticks=true,
ylabel style={font=\color{white!15!black}},
ylabel={$\Vert \hat{u}_n(t)-\tilde{u}(t) \Vert_\infty$},
axis background/.style={fill=white}
]
\addplot [color=black, draw=none, mark=+, mark options={solid, black}, forget plot]
  table[row sep=crcr]{%
0	0.22\\
1	0.23\\
2	0.24\\
3	0.11\\
4	0.12\\
5	0.028\\
6	0.028\\
7	0.013\\
8	0.0048\\
9	0.0048\\
10	0.0014\\
11	0.0011\\
12	0.00043\\
13	0.00017\\
14	0.00012\\
15	3.1e-05\\
16	2.8e-05\\
17	9.5e-06\\
18	4.6e-06\\
19	2.1e-06\\
20	4.3e-07\\
21	5.2e-07\\
22	1.5e-07\\
23	9.2e-08\\
24	3.5e-08\\
25	1.1e-08\\
26	7.4e-09\\
27	1.7e-09\\
28	1.4e-09\\
29	4.7e-10\\
30	2.1e-10\\
31	8.9e-11\\
32	1.9e-11\\
33	1.8e-11\\
34	4.7e-12\\
35	3e-12\\
36	1e-12\\
37	3.6e-13\\
38	1.8e-13\\
39	3.4e-14\\
40	3.6e-14\\
41	8.7e-15\\
42	4.7e-15\\
43	2.7e-15\\
44	8.9e-16\\
45	1.6e-15\\
46	1.3e-15\\
47	1.1e-15\\
48	1.6e-15\\
49	1.8e-15\\
50	1.3e-15\\
51	1.2e-15\\
52	1.6e-15\\
53	1.1e-15\\
54	1.6e-15\\
55	1.3e-15\\
56	1.6e-15\\
57	1.4e-15\\
58	2e-15\\
59	1.8e-15\\
60	1.3e-15\\
61	1.6e-15\\
62	1.3e-15\\
63	1.6e-15\\
64	1.3e-15\\
65	1.6e-15\\
66	1.3e-15\\
67	1.6e-15\\
68	2e-15\\
69	1.9e-15\\
70	7.3e-15\\
71	7.9e-15\\
72	5.6e-14\\
73	1.1e-13\\
74	3.8e-13\\
75	1.1e-12\\
76	2.1e-12\\
77	9.3e-12\\
78	8.1e-12\\
79	6.3e-11\\
80	1.3e-10\\
81	4.4e-10\\
82	1.3e-09\\
83	2.3e-09\\
84	1.1e-08\\
85	1.1e-08\\
86	8.3e-08\\
87	1.6e-07\\
88	5.6e-07\\
89	1.5e-06\\
90	2.4e-06\\
91	1.4e-05\\
92	1.9e-05\\
93	0.00014\\
94	0.00025\\
95	0.00084\\
96	0.0014\\
97	0.0016\\
98	0.0018\\
99	0.0019\\
1e+02	0.5\\
};
\end{axis}
\end{tikzpicture}%}
		\end{subfigure}
		\caption{Left: Magnitude of coefficients representing the solution to the ODE with $\tilde{f}(t) = \cos(4t)$. The coefficients of the Legendre series of the exact solution $\tilde{u}(t)$ are shown as ${\color{blue}\circ}$ and the coefficients $u = H_M(I_M-F_M)^{-1}y$ obtained by the method described in this paper as ${\color{red}\ast}$.
			Right: error of the series $\hat{u}_n(t) = \sum_{k=0}^{n} u_k p_k(t)$ compared to $\tilde{u(t)}$ measured in infinity norm for increasing $n$.}
		\label{fig:coeffs}
	\end{figure}

	Consider now a more oscillatory function $\tilde{f}(t) = -2\pi \imath( 0.1 + \cos(6\pi(t+1))  + \cos(12\pi(t+1)))$ for which we repeat the experiment.
	We choose $M = 601$ and the functions is represented by $\tilde{f}(t) \approx \sum_{d=0}^{74} f_d p_d(t)$.
	Numerically we determine that the band of $(I_{601}-F^{75}_{601})^{-1}$ is $K = 196$, which suggests that $M-K-1 = 404$ coefficients are computed accurately.
	This is verified in Figure \ref{fig:coeffs_moreDifficult}, where the computed coefficients are compared with the exact Legendre coefficients and the accuracy of $\hat{u}_n(t)$ is compared to $\tilde{u}(t)$ in the infinity norm.
	\begin{figure}[!ht]
		\centering
		\hspace{-0.8cm}
		\begin{subfigure}{0.4\textwidth}
			\setlength\figureheight{3cm}
			\setlength\figurewidth{6cm}
			{\input{fig5.tikz}}
		\end{subfigure}
		\hspace{1cm}
		\begin{subfigure}{0.4\textwidth}
			\setlength\figureheight{3cm}
			\setlength\figurewidth{6cm}
			{% This file was created by matlab2tikz.
%
\begin{tikzpicture}

\begin{axis}[%
width=0.951\figurewidth,
height=\figureheight,
at={(0\figurewidth,0\figureheight)},
scale only axis,
xmin=0,
xmax=6e+02,
xlabel style={font=\color{white!15!black}},
xlabel={n},
ymode=log,
ymin=1e-15,
ymax=1,
yminorticks=true,
ylabel style={font=\color{white!15!black}},
ylabel={$\Vert \hat{u}_n(t)-\tilde{u}(t) \Vert_\infty$},
axis background/.style={fill=white}
]
\addplot [color=black, draw=none, mark=+, mark options={solid, black}, forget plot]
  table[row sep=crcr]{%
0	0.83\\
5	0.47\\
10	0.46\\
15	0.39\\
20	0.2\\
25	0.19\\
30	0.19\\
35	0.24\\
40	0.066\\
45	0.037\\
50	0.041\\
55	0.03\\
60	0.011\\
65	0.009\\
70	0.012\\
75	0.0089\\
80	0.0013\\
85	0.0016\\
90	0.0018\\
95	0.0011\\
1e+02	0.00024\\
1e+02	0.00024\\
1.1e+02	0.0004\\
1.2e+02	0.00015\\
1.2e+02	4.5e-05\\
1.2e+02	4.4e-05\\
1.3e+02	5.2e-05\\
1.4e+02	1.4e-05\\
1.4e+02	5.8e-06\\
1.4e+02	9.6e-06\\
1.5e+02	7.1e-06\\
1.6e+02	1.9e-06\\
1.6e+02	9.5e-07\\
1.6e+02	1.5e-06\\
1.7e+02	8e-07\\
1.8e+02	1.2e-07\\
1.8e+02	1.5e-07\\
1.8e+02	1.9e-07\\
1.9e+02	8.1e-08\\
2e+02	1.8e-08\\
2e+02	2.7e-08\\
2e+02	2.2e-08\\
2.1e+02	5.7e-09\\
2.2e+02	2.6e-09\\
2.2e+02	3.4e-09\\
2.2e+02	2.2e-09\\
2.3e+02	6.2e-10\\
2.4e+02	4e-10\\
2.4e+02	4.1e-10\\
2.4e+02	1.8e-10\\
2.5e+02	6.7e-11\\
2.6e+02	4.8e-11\\
2.6e+02	4.2e-11\\
2.6e+02	1.8e-11\\
2.7e+02	7e-12\\
2.8e+02	5.9e-12\\
2.8e+02	3.8e-12\\
2.8e+02	1.7e-12\\
2.9e+02	6.8e-13\\
3e+02	6.2e-13\\
3e+02	3.6e-13\\
3e+02	1.4e-13\\
3.1e+02	7.6e-14\\
3.2e+02	6.3e-14\\
3.2e+02	3.6e-14\\
3.2e+02	1.2e-14\\
3.3e+02	7e-15\\
3.4e+02	9.8e-15\\
3.4e+02	7.9e-15\\
3.4e+02	7.6e-15\\
3.5e+02	5.6e-15\\
3.6e+02	6.5e-15\\
3.6e+02	6.7e-15\\
3.6e+02	5.9e-15\\
3.7e+02	8.6e-15\\
3.8e+02	5.9e-15\\
3.8e+02	6.3e-15\\
3.8e+02	1.2e-14\\
3.9e+02	2.2e-14\\
4e+02	3.5e-14\\
4e+02	4e-14\\
4e+02	2e-13\\
4.1e+02	5.9e-13\\
4.2e+02	1e-12\\
4.2e+02	7.7e-13\\
4.2e+02	7e-13\\
4.3e+02	3.3e-12\\
4.4e+02	4.8e-12\\
4.4e+02	1.1e-11\\
4.4e+02	4.6e-11\\
4.5e+02	1.2e-10\\
4.6e+02	1.4e-10\\
4.6e+02	4.4e-11\\
4.6e+02	1.8e-10\\
4.7e+02	5.9e-10\\
4.8e+02	3.9e-10\\
4.8e+02	2.7e-09\\
4.8e+02	1.2e-08\\
4.9e+02	2.5e-08\\
5e+02	7.8e-09\\
5e+02	5.6e-09\\
5e+02	5.8e-08\\
5.1e+02	6.3e-08\\
5.2e+02	9.3e-08\\
5.2e+02	8.9e-07\\
5.2e+02	3.6e-06\\
5.3e+02	3.4e-06\\
5.4e+02	3e-06\\
5.4e+02	4.6e-06\\
5.4e+02	6.8e-06\\
5.5e+02	5e-06\\
5.6e+02	2.5e-05\\
5.6e+02	0.00032\\
5.6e+02	0.001\\
5.7e+02	0.00072\\
5.8e+02	0.00056\\
5.8e+02	0.00051\\
5.8e+02	0.00044\\
5.9e+02	0.0003\\
6e+02	0.0016\\
6e+02	0.5\\
};
\end{axis}
\end{tikzpicture}%}
		\end{subfigure}
		\caption{Left: Magnitude of coefficients representing the solution to the ODE with $\tilde{f}(t) = -2\pi \imath( 0.1 + \cos(6\pi(t+1))  + \cos(12\pi(t+1)))$. The coefficients of the Legendre series of the exact solution $\tilde{u}(t)$ are shown as ${\color{blue}\circ}$ and the coefficients $u^{(N)} = H_M(I_M-F_M^{(N)})^{-1}y$ obtained by the method described in this paper as ${\color{red}\ast}$.
			Right: error of the series $\hat{u}_n(t) = \sum_{k=0}^{n} u_k p_k(t)$ compared to $\tilde{u(t)}$ measured in infinity norm for increasing $n$.}
		\label{fig:coeffs_moreDifficult}
	\end{figure}
	
	These numerical experiments illustrate that the proposed method is capable of solving scalar ODEs up to high accuracy.
	This method can be generalized to the matrix ODE, where similar numerical behavior has been observed, however treating this case is outside the scope of this report.
	In order to develop a numerical algorithm that can compete with the state-of-the-art methods, the coefficient computation must be performed very efficiently, an a priori estimate of the required size of basis $M$ and a procedure to automatically truncate the series $\sum_{k=0}^{n} u_k p_k(t)$ at an appropriate value of $n$.

	\vspace{\baselineskip}
	%% The style of the following references should be used in all documents.

\end{document}